\documentclass[a4paper,12pt,reqno]{amsart}
\usepackage{amsmath,amssymb}

\usepackage{graphicx}
\usepackage{amsthm}
\usepackage{thmtools}
\usepackage{comment}
\usepackage{hyperref}
\usepackage{float}
\usepackage{caption}
\usepackage{mathtools}
\usepackage{arcs}
\usepackage{yhmath}
\usepackage{tikz}
\usetikzlibrary{calc,intersections}
\makeatletter

\@namedef{subjclassname@2020}{%
\textup{2020} Mathematics Subject Classification} 
\makeatother

\newtheorem{theorem}{Theorem}
\newtheorem{lemma}[theorem]{Lemma}

\title{Weighted Generalizations of Zagier's Phenomenon}

\author{Dragomir Grozev}
\address{Institute of Mathematics and Informatics, Bulgarian Academy of Sciences, Acad. G. Bonchev 8,
1113 Sofia, Bulgaria}
\email{drago.grozev@gmail.com}

\author{Navid Safaei}
\address{Institute of Mathematics and Informatics, Bulgarian Academy of Sciences, Acad. G. Bonchev 8,
1113 Sofia, Bulgaria}

\email{navid@math.bas.bg}

\subjclass[2020]{Primary 11E16; Secondary 11A55, 39B22}

\begin{document}
\begin{abstract}
We study sums associated with the action of $PGL_2(\mathbb Z)$
on continuous piecewise polynomial functions with exactly two
real roots, both irrational. We form weighted sums of the positive
parts of their normalized transforms, using nonnegative weights
compatible with translation, reflection, and inversion.

Under suitable continuity, finiteness, and convergence assumptions,
we prove that these sums are well defined, bounded, $1$-periodic,
and continuous on $\mathbb R$, and satisfy a reciprocal functional
equation. We also extend the construction to finite families
of distinct function orbits.

Our framework recovers Zagier's constancy result and includes
the full family of quadratic sums for which Bengoechea proved
convergence.
\end{abstract}

\keywords{Zagier's phenomenon, sums of binary quadratic forms,
M\"obius transformations, continued fractions, weighted orbit sums,
reciprocal functional equations}

\maketitle
\vspace{-8mm}

\section{Introduction}
\label{sec:introduction}
We study weighted generalizations of Zagier's construction \cite{Zagier1999}. Let $P$ be a continuous piecewise polynomial with exactly two real roots, both irrational. We consider sums of the form
\[
T(x)=\sum_{\gamma\in\Gamma}
\lambda(\gamma,x)
\bigl(\varepsilon_\gamma P^\gamma(x)\bigr)_+,
\]
where $\Gamma=PGL_2(\mathbb Z)$ and the coefficients $\lambda(\gamma,x)$ satisfy compatibility conditions with its generators. We also consider sums indexed directly by $\Gamma_P\backslash\Gamma$.

Under continuity, convergence, and finiteness assumptions on the weights,
the sums define continuous periodic functions satisfying a reciprocal
functional equation. The aggregated setting recovers Zagier's constancy
result and proves convergence and continuity for Bengoechea's quadratic sums.

Related work includes the continued-fraction description of the
summands by Jameson~\cite{Jameson2016}, the identities involving
Hecke operators obtained by Jameson and Raji~\cite{JamesonRaji2013},
and the character-weighted sums studied by Wong~\cite{Wong2018}.
In another direction, Karabulut~\cite{Karabulut2022} proves analogous
results for sums of binary Hermitian forms. Here we allow compatible weights that depend on $x$.

Section~\ref{sec:setup} gives the two setups and their main results.
Section~\ref{sec:proofs} proves the functional equations, convergence,
and continuity. Section~\ref{sec:applications} treats finite families,
the applications to Zagier and Bengoechea, and weights depending on $x$.

\section{Setup and main results}
\label{sec:setup}
\subsection{Non-aggregated setting}
We assume that $P(x)$ is a function satisfying:
\begin{itemize}
\item $P:\mathbb R\to\mathbb R$ is continuous and piecewise
polynomial, with each polynomial piece of degree at most $2r$,
where $r\ge1$ is an integer.

\item $P$ has exactly two real roots $\alpha<\beta$, both irrational,
and
\[
P(x)>0 \quad \text{for } x\in(\alpha,\beta),
\qquad
P(x)<0 \quad \text{for } x\notin[\alpha,\beta].
\]

\item There exists $L<0$ such that
\[
\lim_{x\to-\infty}\frac{P(x)}{x^{2r}}
=
\lim_{x\to+\infty}\frac{P(x)}{x^{2r}}
=L.
\]
\end{itemize}

Let $\Gamma=PGL_2(\mathbb Z)$ act on $P$ by
\[
P^\gamma(x)
=
(cx+d)^{2r}P\left(\frac{ax+b}{cx+d}\right),
\qquad
\gamma=
\begin{pmatrix}
a&b\\
c&d
\end{pmatrix},
\]
with the value at a pole defined by continuity using the assumptions on $P$. For $\gamma\in \Gamma$, define 
\[
\varepsilon_{\gamma}:=
\begin{cases}
1,
&
P^\gamma(\infty)<0,
\\[2mm]
-1,
&
P^\gamma(\infty)>0.
\end{cases}
\]
Thus $\varepsilon_{\gamma}P^\gamma$ always has negative leading coefficient.
Let 
\[
\Gamma_P
=
\{\gamma\in\Gamma: \varepsilon_{\gamma}P^{\gamma}\equiv P\}.
\]

Then $\Gamma_P$ is a subgroup of $\Gamma$, with left coset space $\Gamma_P\backslash \Gamma$. The identity $\varepsilon_{\delta\gamma}P^{\delta\gamma}=\varepsilon_{\gamma}P^{\gamma}$ for $\delta\in \Gamma_P$ shows that $\varepsilon_\gamma P^\gamma$ depends only on $\Gamma_P\gamma$.

For a real-valued function $Q$, write
\[
Q_+(x):=\max\{Q(x),0\}.
\]

The weights $\lambda$ must be compatible with translation, reflection, and inversion. Write
\[
\mathsf T(x)=x+1, \qquad V(x)=-x, \qquad U(x)=\frac{1}{x}
\]
for these transformations.

We assume compatibility with $\sigma\in\{\mathsf T,V,U\}$:
\begin{equation}
\label{eq:eq_lambda_condition}
\lambda(\gamma\sigma^{-1},\sigma x)=\lambda(\gamma,x),
\quad \forall x,\sigma x\in\mathbb R.
\end{equation}
This is motivated by $(\gamma\sigma^{-1})(\sigma x)=\gamma x$.
We also impose the following properties:

\noindent\textbf{(1)}
For every $\gamma\in\Gamma$, the function
$x\mapsto\lambda(\gamma,x)\varepsilon_\gamma P^\gamma(x)$
is continuous.

\noindent\textbf{(2)}
For every $\gamma\in\Gamma$, the sum
$\sum_{\delta\in\Gamma_P}\lambda(\delta\gamma,x)|P^\gamma(x)|$
converges locally uniformly in $x$.

\noindent\textbf{(3)}
There are finitely many cosets $\Gamma_P\gamma$ such that
$\varepsilon_\gamma P^\gamma(0)>0$.

By translation and inversion, induction along the finite continued
fraction of a rational number extends Property~(3) to
\begin{equation}
\label{eq:eq_property_3}
\text{For every }x\in\mathbb Q,\qquad
\#\left\{
\Gamma_P\gamma\in\Gamma_P\backslash\Gamma:
\varepsilon_\gamma P^\gamma(x)>0
\right\}<\infty.
\end{equation}

\begin{theorem}[Non-aggregated setting]
\label{thm:main_thm}
Let $\lambda(\gamma,x)\ge 0$ and $P$ satisfy the preceding compatibility condition and Properties (1), (2), (3), together with the initial assumptions. Then  
  \begin{equation}
\label{eq:eq_def_sum}
T(x)=\sum_{\gamma\in\Gamma}\lambda(\gamma,x)
\bigl(\varepsilon_{\gamma}{P^\gamma}\bigr)_+(x),
\end{equation}  
is well defined, even, $1$-periodic, continuous and satisfies the corresponding functional equation:
\begin{equation}
\label{eq:eq_func_eq}
x^{2r}T\left(\frac{1}{x}\right)-T(x)=-R_\lambda(x), \quad x\ne 0,
\end{equation}
where
\begin{equation}
\label{eq:eq_R_definition}
R_\lambda(x)
:=
\sum_{\left(\varepsilon_{\gamma}{P^{\gamma}}\right)_+(0)>0} \lambda(\gamma, x) \varepsilon_{\gamma}P^{\gamma}(x).
\end{equation}
\end{theorem}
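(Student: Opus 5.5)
Parity, periodicity and the functional equation will all come from a change of summation variable $\gamma\mapsto\gamma\sigma^{-1}$ with $\sigma\in\{\mathsf T,V,U\}$, using \eqref{eq:eq_lambda_condition}. Well-definedness and continuity will come from a local-finiteness argument: near each point only finitely many cosets $\Gamma_P\gamma$ contribute, and Property~(2) handles the sum inside each coset.

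\textbf{Step 1 (rearrangement identity).} First I check that the action is a right action up to sign: $P^{\gamma\sigma^{-1}}(\sigma x)=j_\sigma(x)P^{\gamma}(x)$ with an explicit automorphy factor. For $\mathsf T$ and $V$ the factor is $j_\sigma=1$. For $U$ it is $j_U(x)=x^{-2r}$, and since $2r$ is even there is no sign issue. The sign $\varepsilon$ is then adjusted automatically, because it is defined through the leading coefficient, so
\[
\varepsilon_{\gamma\sigma^{-1}}P^{\gamma\sigma^{-1}}(\sigma x)=j_\sigma(x)\,\varepsilon_\gamma P^\gamma(x)\quad\text{for }\sigma=\mathsf T,V.
\]
For $U$ the sign flip must be checked: the leading coefficient of the transformed function is its value at $0$, so the identity can hold only up to a sign governed by $\varepsilon_\gamma P^\gamma(0)$.

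\textbf{Step 2 (evenness and periodicity).} With $\lambda\ge0$, the map $\gamma\mapsto\gamma\sigma^{-1}$ is a bijection of $\Gamma$ and all terms are nonnegative. Applying it for $\sigma=\mathsf T$ gives $T(x+1)=T(x)$, and for $\sigma=V$ gives $T(-x)=T(x)$.

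\textbf{Step 3 (functional equation).} Apply the same reindexing with $\sigma=U$. This gives
\[
x^{2r}T(1/x)=\sum_\gamma\lambda(\gamma,x)\bigl(\pm\varepsilon_\gamma P^\gamma(x)\bigr)_+,
\]
where the sign is $-$ exactly for the $\gamma$ with $\varepsilon_\gamma P^\gamma(0)>0$. Call these the \emph{exceptional} $\gamma$. Using $(-Q)_+=Q_+-Q$, the difference $x^{2r}T(1/x)-T(x)$ reduces to $-\sum_{\text{exceptional}}\lambda(\gamma,x)\varepsilon_\gamma P^\gamma(x)=-R_\lambda(x)$. By Property~(3) the exceptional $\gamma$ fill only finitely many cosets, and by Property~(2) the sum over each coset converges. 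So $R_\lambda$ is finite and continuous, and the subtraction is legitimate provided $T$ is finite.

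\textbf{Step 4 (finiteness and continuity).} This is the main obstacle. Group the sum by cosets:
\[
T(x)=\sum_{\Gamma_P\gamma}\Bigl(\sum_{\delta\in\Gamma_P}\lambda(\delta\gamma,x)\Bigr)\bigl(\varepsilon_\gamma P^\gamma\bigr)_+(x).
\]
Each inner coset sum is continuous by Properties~(1) and~(2). The task is to show that on a compact subset of $[0,1]$ only finitely many cosets have $\varepsilon_\gamma P^\gamma>0$ somewhere on it.

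The positivity set of $\varepsilon_\gamma P^\gamma$ is an interval whose endpoints are $\gamma^{-1}\alpha,\gamma^{-1}\beta$ (or its complement). I would use \eqref{eq:eq_property_3} at the rationals $0$, $1/2$, $1$ together with a Farey-type argument, following Zagier. Suppose a positivity interval $I$ inside $[0,1]$ contains no rational point from a finite set. Then I would apply Step~1 with $\mathsf T,U$ along continued fractions: repeatedly map $I$ by $x\mapsto 1/x-n$. Since $\alpha,\beta$ are irrational, the endpoints have infinite continued fractions, and after finitely many steps $I$ must straddle an integer. Each such step is a reindexing, so it is an injection of cosets, and this bounds the number of such $I$ by the finitely many counted in \eqref{eq:eq_property_3}.

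It remains to show that only finitely many cosets have positivity sets meeting $[0,1]$ at all. Here I would argue that $\Gamma_P$-orbits of pairs $(\gamma^{-1}\alpha,\gamma^{-1}\beta)$ correspond to intervals of the Farey tessellation crossing a fixed geodesic region, and that only finitely many of them cross any compact set. This step is where care is needed, and I would try to reduce it entirely to \eqref{eq:eq_property_3} at the finitely many Farey fractions of bounded level.

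Once this finiteness holds, $T$ is locally a finite sum of continuous functions. Hence it is well defined and continuous on $[0,1]$, and by Step~2 on all of $\mathbb R$.
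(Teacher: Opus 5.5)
Your Steps~1--3 are correct and are essentially the paper's reindexing arguments: compatibility for $\mathsf T,V,U$; for $U$ the normalizing sign flips exactly when $\varepsilon_\gamma P^\gamma(0)>0$; then $(-Q)_+=Q_+-Q$.

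The gap is in Step~4: the local finiteness you want to prove is false, so the approach cannot be completed. Take $\gamma\in\Gamma$ with $\gamma^{-1}(z)=1/(z+n)$. The roots of $P^\gamma$ are $\gamma^{-1}\alpha$ and $\gamma^{-1}\beta$. So for large $n$ the positivity set of $\varepsilon_\gamma P^\gamma$ is $\bigl(1/(\beta+n),1/(\alpha+n)\bigr)\subset(0,1)$.

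These intervals are pairwise distinct, hence come from pairwise distinct cosets, and they accumulate at $0$. Transporting them by an element of $\Gamma$ that sends $0$ to an arbitrary rational shows that every open interval meets the positivity sets of infinitely many cosets. So $T$ is nowhere locally a finite sum. Moreover, at irrational points the series in general has infinitely many nonzero terms at the point itself, which is exactly why Zagier needs a separate convergence argument there.

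Your Farey reduction fails for the same reason. The number of reduction steps is unbounded, and the composite reduction is far from injective: $x\mapsto 1/x-n$ sends every one of the intervals above to the same interval $(\alpha,\beta)$. So Property~(3) at finitely many rationals cannot bound the number of contributing cosets. Neither well-definedness at irrational points nor continuity follows.

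What is missing is a quantitative bound to replace finiteness. The paper proceeds as follows.

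\textbf{Boundedness on $\mathbb Q$.} The extension of Property~(3) to all rationals, together with Property~(2), makes $T$ finite on $\mathbb Q$. Lemma~\ref{lem:lem_1} shows $R_\lambda$ is continuous with $R_\lambda(y)/y^{2r}\to -T(0)$, so $\sup_{y\ge1}|R_\lambda(y)|/y^{2r}<\infty$. Iterating the functional equation along the finite continued fraction of a rational $x\in[0,1)$ and using the two-step contraction $1/(y_jy_{j+1})\le 2/3$ gives $|T|\le C$ on $\mathbb Q$.

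\textbf{Extension to all reals.} Nonnegativity carries the bound to every real $x_0$. A finite partial sum over $\gamma$ with $\varepsilon_\gamma P^\gamma(x_0)>0$ is continuous at $x_0$ and bounded by $C$ at rationals, so $T(x_0)\le C$.

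\textbf{Continuity.} This comes from the $k$-fold iterated functional equation. For $x$ near an irrational $x_0$ with the same first $k$ partial quotients, the term $T(\alpha_k(x))\prod_{i\le k}y_i(x)^{-2r}$ is uniformly small by boundedness and contraction. The finitely many terms $R_\lambda(y_j(x))\prod_{i\le j}y_i(x)^{-2r}$ depend continuously on $x$. Rational $x_0$ are handled using continuity at $0$ (obtained from the functional equation) together with $T(1-z)=T(z)$.
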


\subsection{Aggregated setting}
Since $\varepsilon_\gamma P^\gamma$ depends only on the coset,
set $P_{\overline\gamma}:=\varepsilon_\gamma P^\gamma$ and consider
\[
T(x)=
\sum_{\overline\gamma\in\Gamma_P\backslash\Gamma}
\lambda(\overline\gamma,x)(P_{\overline\gamma})_+(x),
\]
where $\lambda(\overline\gamma,x)\ge0$.
We write $\gamma$ for $\overline\gamma$ when unambiguous.
We assume compatibility with $\sigma\in\{\mathsf T,V,U\}$:
\begin{equation}
\label{eq:eq_coset_lambda_condition}
\lambda(\gamma\sigma^{-1},\sigma x)=\lambda(\gamma,x),
\quad \forall x,\sigma x\in\mathbb R.
\end{equation}
We also impose:

\noindent\textbf{(1')}
For every $\gamma\in\Gamma_P\backslash\Gamma$, the function
$x\mapsto\lambda(\gamma,x)P_\gamma(x)$ is continuous.

\noindent\textbf{(3')}
There are finitely many cosets $\gamma\in\Gamma_P\backslash\Gamma$
such that $P_\gamma(0)>0$.

There is no analogue of Property~(2), since there is no
stabilizer summation.

\begin{theorem}[Aggregated setting]
    \label{thm:main_coset_thm}
Let $\lambda(\overline{\gamma},x)\ge 0$,\quad $\overline{\gamma}\in \Gamma_P\backslash \Gamma$, satisfy the preceding compatibility condition and Properties (1'), (3'), with $P$ satisfying the initial assumptions. Then

\begin{equation}
\label{eq:eq_aggr_def_sum}
T(x)=\sum_{\overline{\gamma}\in \Gamma_P\backslash \Gamma}\lambda(\overline{\gamma},x)
\bigl(P_{\overline{\gamma}}\bigr)_+(x),
\end{equation}
is well defined, even, $1$-periodic, and continuous and satisfies the corresponding reciprocal functional equation:
\begin{equation*}
x^{2r}T\left(\frac{1}{x}\right)-T(x)=-R_\lambda(x), \quad x\ne 0,
\end{equation*}
where
\begin{equation*}
R_\lambda(x)
:=
\sum_{\substack{\overline{\gamma}\in\Gamma_P\backslash\Gamma\\ P_{\overline{\gamma}}(0)>0}} \lambda(\overline{\gamma}, x) P_{\overline{\gamma}}(x).
\end{equation*}

\end{theorem}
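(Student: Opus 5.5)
The plan is to reindex the sum over $\Gamma_P\backslash\Gamma$ using the three generators $\mathsf T$, $V$, $U$. This gives the symmetries and the functional equation at once, at least formally, as identities in $[0,\infty]$. I then use the functional equation together with the Gauss map to prove finiteness, boundedness and continuity. The main difficulty is the analytic step; the identities are formal.

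\emph{Formal identities.} For $\sigma\in\{\mathsf T,V\}$ the factor $(cx+d)^{2r}$ and the leading coefficient $P^\gamma(\infty)$ are unchanged up to an even power of $-1$. Hence $P_{\overline{\gamma\sigma^{-1}}}(\sigma x)=P_{\overline\gamma}(x)$, and right multiplication by $\sigma^{-1}$ is a bijection of $\Gamma_P\backslash\Gamma$. Combined with \eqref{eq:eq_coset_lambda_condition}, this gives $T(x+1)=T(x)$ and $T(-x)=T(x)$ termwise.

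For $U$ the situation differs. We have $x^{2r}P^{\gamma}(1/x)=P^{\gamma U}(x)$ (with the appropriate sign convention). The leading coefficient of this polynomial equals the value of $P^\gamma$ at $0$. So $x^{2r}P_{\overline\gamma}(1/x)=P_{\overline{\gamma U}}(x)$ when $P_{\overline\gamma}(0)<0$, and $x^{2r}P_{\overline\gamma}(1/x)=-P_{\overline{\gamma U}}(x)$ when $P_{\overline\gamma}(0)>0$. The value $0$ cannot occur, since the roots are irrational. Using $(-Q)_+=Q_+-Q$, reindexing by $\gamma\mapsto\gamma U$, and applying the compatibility for $U$, we get
\[
x^{2r}T(1/x)=T(x)-\sum_{P_{\overline\gamma}(0)>0}\lambda(\overline{\gamma},x)P_{\overline\gamma}(x)=T(x)-R_\lambda(x).
\]
The subtracted sum is finite by (3'). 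So this identity is legitimate as soon as $T$ is finite. The same argument, run with $\mathsf T$ and $U$ along the continued fraction expansion, gives finiteness of the sum at every rational point, as in \eqref{eq:eq_property_3}.

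\emph{Finiteness and boundedness (main obstacle).} By periodicity and evenness it suffices to work on $[0,1]$. For $x\in(0,1)$, write $x=1/(k+y)$ with $k\ge1$ and $y\in[0,1)$. By the functional equation and periodicity,
\[
T(x)=x^{2r}T(y)+x^{2r}R_\lambda(1/x),
\]
where each $R_\lambda$ is a finite sum of continuous functions by (1') and (3'). I would first apply this to the truncated sums $T_F$ over finite sets $F$ of cosets. These satisfy the same recursion with an inequality, because the reindexed remainder is finite. Iterating along the Gauss map, the products $x_0^{2r}x_1^{2r}\cdots x_n^{2r}$ decay at least like $2^{-rn}$. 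The worry is that $R_\lambda(1/x)$ is unbounded as $k\to\infty$.

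To control this, I would use $x^{2r}R_\lambda(1/x)=R$-type terms evaluated at $x$ itself. These are finitely many continuous functions near $0$, which leads to a uniform bound $\sup_{[0,1]}T_F\le C$ independent of $F$. Hence $T$ is well defined and bounded. The step I expect to fail most easily is the uniformity near $x=0$, where infinitely many Gauss steps with large $k$ occur. There I would use instead that the cosets with $P_{\overline\gamma}(0)>0$ are finitely many. All other terms vanish on a neighbourhood of $0$: their root intervals avoid $0$, but those intervals could accumulate, so this needs care.

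\emph{Continuity.} By (1'), each term is continuous. Fix $x_0$ and truncate at depth $n$ of the Gauss iteration. The terms not captured by depth $n$ are bounded by $2^{-rn}C$ uniformly near $x_0$, so $T$ is a locally uniform limit of continuous functions. At rational $x_0$, the continued fraction terminates. There I would instead use that only finitely many root intervals contain $x_0$, together with the expansion on each side of $x_0$.
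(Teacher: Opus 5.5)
Your reindexing argument, the finiteness on $\mathbb Q$, the Gauss-map iteration and the treatment of irrational points follow the paper. On boundedness, first a sign slip: the recursion is $T(x)=x^{2r}T(y)-x^{2r}R_\lambda(1/x)=x^{2r}T(y)+R_\lambda(x)$. The last equality holds because $\overline\gamma\mapsto\overline{\gamma U}$ permutes the finitely many cosets with $P_{\overline\gamma}(0)>0$ (the paper's Lemma~\ref{lem:lem_1}). This identity already disposes of large $k$, so the bound needs nothing special near $0$. More importantly, you do not say how the iteration for $T_F$ is closed. At an irrational $x$, the tail $\prod_{i<n}x_i^{2r}\,T_{F^{(n)}}(x_n)$ involves a set of cosets $F^{(n)}$ that moves with $n$, and you have no $n$-independent bound on it. Run the iteration at rational $x$ instead. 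There it stops after finitely many steps at $T_{F^{(n)}}(0)\le T(0)<\infty$ by (3'), which gives $T_F\le C$ on $\mathbb Q\cap[0,1]$ uniformly in $F$. Continuity of each $T_F$, from (1'), then transfers the bound to all of $[0,1]$. This is the paper's ``bound on $\mathbb Q$, then semicontinuity'' step.

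The genuine gap is continuity at rational points. Your mechanism at $0$, that the terms with $P_{\overline\gamma}(0)<0$ vanish on a neighbourhood of $0$, is false in general. The root intervals form a $\Gamma$-invariant family. So together with any $(a,b)$, $0<a<b$, the family contains $(1/(b+k),1/(a+k))\subset(0,1/k)$ for every $k\ge1$. For $\lambda\equiv1$ these terms are nonzero arbitrarily close to $0$. The same objection applies to ``only finitely many root intervals contain $x_0$'' at other rationals.

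The missing idea is a quantitative bound on the whole tail, which the functional equation provides. The sum $T_1$ over cosets with $P_{\overline\gamma}(0)<0$ satisfies $x^{2r}T_1(1/x)=T_1(x)$ exactly, so boundedness gives $0\le T_1(x)\le Cx^{2r}$. The remaining finite sum is continuous and equals $T(0)$ at $0$. Equivalently, using $R_\lambda(0)=T(0)$,
\[
|T(x)-T(0)|\le C|x|^{2r}+|R_\lambda(x)-R_\lambda(0)|\longrightarrow 0 .
\]
For $x_0=[0;n_1,\ldots,n_k]\ne0,1$, push this through the finite recurrence along the two one-sided expansions $[0;n_1,\ldots,n_k,N,\ldots]$ and $[0;n_1,\ldots,n_k-1,1,N,\ldots]$ with $N\to\infty$. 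Their final tails tend to $0$ and to $1$ respectively. Conclude using $T(1-z)=T(z)$ and the continuity of $R_\lambda$.
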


\section{Proofs}
\label{sec:proofs}

\subsection{Functional equations}
\label{subsec:functional-equations}

For \(x\in\mathbb{Q}\), \eqref{eq:eq_property_3} gives finitely many cosets in
$\Gamma_P\backslash\Gamma$ contributing at $x$. Property~(2) gives
convergence within each, so $T(x)$ is well defined for \(x\in\mathbb{Q}\).

\subsubsection*{Translation} 
Set $\sigma:=\mathsf T$. We have
\begin{align*}
    T(x+1)&=\sum_{\gamma\in\Gamma}\lambda(\gamma,x+1)
\bigl(\varepsilon_{\gamma}{P^\gamma}\bigr)_+(x+1)\\
&= \sum_{\gamma\in\Gamma}\lambda(\gamma,\sigma x)
\bigl(\varepsilon_{\gamma}P^{\gamma\sigma}\bigr)_+(x)\\
&=\sum_{\gamma\sigma\in\Gamma}\lambda(\gamma\sigma,x)
\bigl(\varepsilon_{\gamma \sigma}P^{\gamma\sigma}\bigr)_+(x)\\
&=T(x).
\end{align*}
Hence $T$ is $1$-periodic.

\subsubsection*{Reflection}
The same calculation with $\sigma=V$ gives $T(-x)=T(x)$.

\subsubsection*{Inversion}

For $U: x\mapsto 1/x$, set $\sigma:=U$ and write $T=T_0+T_1$, where
\[
T_0(x):= \sum_{\gamma\in \Gamma,\, \left(\varepsilon_{\gamma}P^{\gamma}\right)_+(0)>0} \lambda(\gamma,x) \left(\varepsilon_{\gamma}P^{\gamma}\right)_+(x).
\]
\[
T_1(x):= \sum_{\gamma\in \Gamma,\, \left(\varepsilon_{\gamma}P^{\gamma}\right)_+(0)=0} \lambda(\gamma,x) \left(\varepsilon_{\gamma}P^{\gamma}\right)_+(x).
\]
Assuming $x\ne 0$, we have
\begin{align*}
x^{2r}T_1\left(\frac{1}{x}\right)
&=
\sum_{\left(\varepsilon_\gamma P^\gamma\right)_+(0)=0}
\lambda(\gamma,\sigma x)\,
x^{2r}\left(\varepsilon_\gamma P^\gamma\right)_+(\sigma x)
\\
&=
\sum_{\left(\varepsilon_\gamma P^\gamma\right)_+(0)=0}
\lambda(\gamma,\sigma x)\,
\left(\varepsilon_{\gamma\sigma}P^{\gamma\sigma}\right)_+(x)
\\
&=
\sum_{\left(\varepsilon_{\gamma\sigma}P^{\gamma\sigma}\right)_+(0)=0}
\lambda(\gamma\sigma,x)\,
\left(\varepsilon_{\gamma\sigma}P^{\gamma\sigma}\right)_+(x)
\\
&=
T_1(x).
\end{align*}
This implies
\[
x^{2r}T_1(1/x)=T_1(x).
\]

Similarly,
\begin{align*}
    x^{2r}T_0\left(\frac{1}{x}\right)&=\sum_{\left(\varepsilon_{\gamma}{P^{\gamma}}\right)_+(0)>0} \lambda(\gamma,\sigma x) x^{2r} \left(\varepsilon_{\gamma}{P^{\gamma}}\right)_+(\sigma x)\\
    &=\sum_{\left(\varepsilon_{\gamma}{P^{\gamma}}\right)_+(0)>0} \lambda(\gamma,\sigma x) \left(-\varepsilon_{\gamma\sigma}P^{\gamma\sigma}\right)_+(x)\\ 
    &=\sum_{\left(\varepsilon_{\gamma}{P^{\gamma}}\right)_+(0)>0} \lambda(\gamma,\sigma x) \left({-\varepsilon_{\gamma\sigma}P^{\gamma\sigma}}(x) + \left(\varepsilon_{\gamma\sigma} P^{\gamma\sigma}\right)_+(x) \right)\\
    &=\sum_{\left(\varepsilon_{\gamma\sigma}{P^{\gamma\sigma}}\right)_+(0)>0} \lambda(\gamma\sigma, x) \left({-\varepsilon_{\gamma\sigma}P^{\gamma\sigma}}(x) + \left(\varepsilon_{\gamma\sigma}{P^{\gamma\sigma}}\right)_+(x) \right).
\end{align*}
Thus

\[
x^{2r}T_0(1/x)-T_0(x)
=-\sum_{\left(\varepsilon_{\gamma}P^{\gamma}\right)_+(0)>0} \lambda(\gamma, x) \varepsilon_{\gamma}P^{\gamma}(x) .
\]
Together with $x^{2r}T_1(1/x)=T_1(x)$ and
\eqref{eq:eq_R_definition}, this gives \eqref{eq:eq_func_eq}.

\subsection{Convergence and continuity}
\label{subsec:convergence-continuity}
By \eqref{eq:eq_def_sum} and the preceding subsection, $T$ is $1$-periodic and satisfies \eqref{eq:eq_func_eq} wherever defined. Since $T(x)$ is defined for $x\in\mathbb{Q}$, we first bound $T$ on the rationals, extend convergence to $\mathbb{R}$, and then prove continuity of $T$. 

\begin{lemma}
\label{lem:lem_1}
    The function $R_{\lambda}(x)$ is continuous on $\mathbb{R}$ and 
    \[
    \displaystyle \lim_{x\to\pm\infty} R_{\lambda}(x)/x^{2r} = -T(0).
    \]
\end{lemma}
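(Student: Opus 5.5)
Our plan is to write $R_\lambda$ as a finite sum of continuous pieces and then compute its growth at infinity from the stabilizer sums. By Property~(3), only finitely many cosets $\Gamma_P\gamma_1,\dots,\Gamma_P\gamma_m$ satisfy $\varepsilon_\gamma P^\gamma(0)>0$. Hence
\[
R_\lambda(x)=\sum_{j=1}^m S_j(x),\qquad S_j(x):=\sum_{\delta\in\Gamma_P}\lambda(\delta\gamma_j,x)\,\varepsilon_{\gamma_j}P^{\gamma_j}(x),
\]
where we use $\varepsilon_{\delta\gamma}P^{\delta\gamma}=\varepsilon_\gamma P^\gamma$. Each term $x\mapsto\lambda(\delta\gamma_j,x)\varepsilon_{\gamma_j}P^{\gamma_j}(x)$ is continuous by Property~(1). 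By Property~(2), the series defining $S_j$ converges absolutely and locally uniformly, since $\lambda\ge0$. So $S_j$ is continuous, and $R_\lambda$, a finite sum of such functions, is continuous on $\mathbb R$.

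For the limit we use the inversion compatibility \eqref{eq:eq_lambda_condition} with $\sigma=U$. For $x\neq0$,
\[
\lambda(\gamma,x)=\lambda(\gamma U,1/x),
\]
and by definition of the action, $x^{-2r}\varepsilon_\gamma P^{\gamma}(x)=\pm\,\varepsilon_{\gamma U}P^{\gamma U}(1/x)$. We fix the sign as in the inversion computation: it equals $-\varepsilon_{\gamma U}P^{\gamma U}(1/x)$ when $\varepsilon_\gamma P^\gamma(0)>0$, because $P^{\gamma U}(\infty)$ is $P^\gamma(0)$ up to the normalization sign. Substituting $y=1/x$ gives
\[
\frac{R_\lambda(x)}{x^{2r}}=-\sum_{\varepsilon_\gamma P^\gamma(0)>0}\lambda(\gamma U,y)\,\varepsilon_{\gamma U}P^{\gamma U}(y)\cdot(\text{sign}),
\]
which is again a finite union of stabilizer sums, now in the continuous variable $y$. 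Letting $x\to\pm\infty$ means $y\to0^{\pm}$. Continuity at $y=0$ of the reindexed sum follows from Properties~(1) and~(2), applied to the cosets $\Gamma_P\gamma_jU$. The limit is therefore the value at $y=0$. That value is $\sum_{\gamma}\lambda(\gamma U,0)\,\varepsilon_{\gamma}P^\gamma(\infty)$, suitably signed, and after relabeling $\gamma U\mapsto\gamma$ it equals $-\sum_{\varepsilon_\gamma P^\gamma(0)>0}\lambda(\gamma,0)\,\varepsilon_\gamma P^\gamma(0)$. The terms with $\varepsilon_\gamma P^\gamma(0)\le 0$ contribute nothing to $T(0)$, so this sum is exactly $-T(0)$.

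The main obstacle is the bookkeeping at the pole. Compatibility \eqref{eq:eq_lambda_condition} only holds for $x,\sigma x\in\mathbb R$, so it cannot be applied directly at $y=0$. We will handle this by taking the limit $y\to0$ only after using continuity of the reindexed stabilizer sums on a punctured neighbourhood, where the identity holds pointwise. Those sums are already continuous at $y=0$ by Property~(2).

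Alternatively, we could avoid the $U$-reindexing and work in $x$ directly. Write $\lambda(\gamma,x)\varepsilon_\gamma P^\gamma(x)/x^{2r}$ and note that $P^\gamma(x)/x^{2r}\to P^\gamma(\infty)$. Since $P^\gamma(\infty)=P^{\gamma U}(0)$ up to sign, the same identification with $T(0)$ follows, provided the locally uniform convergence extends to a neighbourhood of $\infty$. Establishing that extension is precisely what the $U$-substitution does.
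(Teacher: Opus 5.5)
Your proposal is correct and follows essentially the paper's route. Continuity comes from Properties (1)--(3). The asymptotic comes from the substitution $y=1/x$, using compatibility for $\sigma=U$ and $\varepsilon_{\gamma U}=-\varepsilon_\gamma$, and then letting $y\to0^\pm$ via continuity of the reindexed stabilizer sums.

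The one step you leave implicit is that $\gamma\mapsto\gamma U$ maps $\{\gamma:\varepsilon_\gamma P^\gamma(0)>0\}$ onto itself; the paper states this explicitly. It holds because $\varepsilon_{\gamma U}P^{\gamma U}(0)=-\varepsilon_\gamma P^\gamma(\infty)>0$ and $U^2=1$. This fact is what justifies your final relabeling $\gamma U\mapsto\gamma$, and it also lets you remove the vague ``suitably signed'' and the redundant ``(sign)'' factor.
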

\begin{proof}

In \eqref{eq:eq_R_definition}, $\gamma$ runs over finitely many cosets by Property~(3). Property~(1) makes $\lambda(\gamma,x)\varepsilon_{\gamma}P^{\gamma}$ continuous, and Property~(2) allows locally uniform summation within each coset. Thus $R_\lambda(x)$ is continuous.

For the asymptotic, put $t=1/y$. If
\[
\left(\varepsilon_\gamma P^\gamma\right)_+(0)>0,
\]
then $\varepsilon_{\gamma U}=-\varepsilon_\gamma$, and therefore
\[
\varepsilon_{\gamma U}P^{\gamma U}(t)
=
-t^{2r}\varepsilon_\gamma P^\gamma(1/t).
\]
Using the compatibility condition for $\lambda$, we obtain
\[
\frac{R_\lambda(y)}{y^{2r}}
=
-\sum_{\left(\varepsilon_\gamma P^\gamma\right)_+(0)>0}
\lambda(\gamma U,t)\,
\varepsilon_{\gamma U}P^{\gamma U}(t).
\]
The map $\gamma\mapsto\gamma U$ is a bijection of the set of indices
satisfying
\[
\left(\varepsilon_\gamma P^\gamma\right)_+(0)>0
\]
onto itself, since
\[
\varepsilon_{\gamma U}P^{\gamma U}(0)
=
-\varepsilon_\gamma P^\gamma(\infty)>0.
\]
Hence
\[
\frac{R_\lambda(y)}{y^{2r}}
=
-\sum_{\left(\varepsilon_\gamma P^\gamma\right)_+(0)>0}
\lambda(\gamma,t)\varepsilon_\gamma P^\gamma(t).
\]
Letting $y\to\pm\infty$, equivalently $t\to0^\pm$, and using local
uniform convergence gives
\[
\lim_{y\to\pm\infty}\frac{R_\lambda(y)}{y^{2r}}
=
-\sum_{\left(\varepsilon_\gamma P^\gamma\right)_+(0)>0}
\lambda(\gamma,0)\varepsilon_\gamma P^\gamma(0)
=
-T(0).
\]
\end{proof}

\noindent \textbf{Remark.} The same proof applies to the direct aggregated setting: all sums are taken over \(\Gamma_P\backslash\Gamma\); Property~$(3')$ makes the
remainder sum finite, and Property~$(1')$ makes each of its summands
continuous.

We use standard facts about continued fractions [see \cite[Chapter~X]{HW}]. By periodicity of $T$, take $x\in[0,1)$ and write the continued fraction of $x$ as
\[
x=[0;n_1,n_2,\ldots]
\]
where it is finite when $x\in\mathbb{Q}$. We denote its successive
tails by
\[
\alpha_0=x,\qquad
\alpha_{j-1}=\frac{1}{n_j+\alpha_j}.
\]
For a rational $x=[0;n_1,\ldots,n_k]$ we take $\alpha_k=0$. Let us set
\[
y_j:=n_j+\alpha_j=\frac{1}{\alpha_{j-1}}.
\]
Thus
\[
\alpha_{j-1}=\frac1{y_j},
\qquad
y_j\geq1.
\]

\noindent\textbf{Uniform boundedness.} For $x\in\mathbb{Q}$ with $x=[0;n_1,\ldots,n_k]$, iteration gives
\[
T(x)
=
T(0)\prod_{i=1}^{k}\frac1{y_i^{2r}}
-
\sum_{j=1}^{k}
\frac{R_\lambda(y_j)}{y_j^{2r}}
\prod_{i=1}^{j-1}\frac1{y_i^{2r}},
\]
together with
\[
\sup_{y\geq1}
\left|\frac{R_\lambda(y)}{y^{2r}}\right|<\infty
\]
and the two-step contraction
\begin{equation}
\label{eq:eq_two_step_contr}
\frac1{y_jy_{j+1}}\leq\frac23,
\end{equation}
we obtain
\[
\sup_{x\in\mathbb Q}|T(x)|\leq C
\]
for some constant $C$. We now prove convergence for every real $x$, following the argument used in \cite[p.~1158]{Zagier1999}. Fix $x_0\in\mathbb R$ and take an arbitrary finite set
$\mathcal F\subset\Gamma$, such that
$\varepsilon_{\gamma}P^{\gamma}(x_0)>0$ for all
$\gamma\in\mathcal F$. Consider
\[
T_{\mathcal{F}}(x):= \sum_{\gamma\in \mathcal{F}} \lambda(\gamma,x) \left(\varepsilon_{\gamma} P^{\gamma} \right)_+(x).
\]
The functions $\varepsilon_\gamma P^\gamma$,
$\gamma\in\mathcal F$, are positive in a neighborhood of $x_0$.
Hence Property~(1) implies that $T_{\mathcal F}$ is continuous
at $x_0$.

For any $y\in\mathbb Q$,
\[
T_{\mathcal F}(y)\leq T(y)\leq C.
\]
Letting $y\to x_0$ through rational values gives $T_{\mathcal F}(x_0)\leq C$. Since $\mathcal F$ is arbitrary,
\[
T(x_0)=\sup_{\mathrm{finite}\, \mathcal F\subset\Gamma\ }
T_{\mathcal F}(x_0)\leq C.
\]
Thus the nonnegative series \eqref{eq:eq_def_sum} converges
for every $x\in\mathbb R$, and $T$ is uniformly bounded.

\noindent \textbf{Continuity.} To prove that $T$ is continuous, periodicity allows us to restrict attention to
$x_0\in[0,1]$.

Suppose first that $x_0$ is irrational, with continued fraction
\[
x_0=[0;n_1,n_2,\ldots].
\]
Fix $k$. If $x$ is sufficiently close to $x_0$, then $x$ has the same
first $k$ partial quotients:
\[
x=[0;n_1,\ldots,n_k,\ldots].
\]
Writing
\[
y_j=n_j+\alpha_j,
\qquad
\alpha_{j-1}=\frac1{y_j},
\]
and iterating the functional equation $k$ times, we obtain
\begin{equation}
\label{eq:eq_recurence_T}
T(x)=T(\alpha_k(x))
\prod_{i=1}^{k}\frac1{y_i(x)^{2r}}-\sum_{j=1}^{k} R_\lambda(y_j(x)) \prod_{i=1}^{j}\frac1{y_i(x)^{2r}},
\end{equation}
and the analogous formula for $x_0$.

From~\eqref{eq:eq_two_step_contr}, we get 
\[
\prod_{i=1}^{k}\frac1{y_i^{2r}}
\ll
\left(\frac23\right)^{2r\lfloor k/2\rfloor}.
\]
Since $T$ is bounded on $[0,1]$, the first term in \eqref{eq:eq_recurence_T} can therefore
be made arbitrarily small by choosing $k$ sufficiently large.

For this fixed $k$, the quantities $y_j(x)$, $j\leq k$, tend to
$y_j(x_0)$ as $x\to x_0$. Since $R_\lambda$ is continuous, the finite
sum in \eqref{eq:eq_recurence_T} tends to the corresponding finite sum for $x_0$. It follows
that $T(x)\longrightarrow T(x_0)$.

Now suppose that $x_0$ is rational. We consider first the case $x_0=0$.
Since $R_\lambda(0)=T(0)$, the functional equation and boundedness give
\[
|T(x)-T(0)|
\le C|x|^{2r}
   +|R_\lambda(x)-R_\lambda(0)|
\longrightarrow 0
\qquad (x\to 0),
\]
which proves the continuity at $x=0$. By $1$-periodicity, continuity at $0$ also implies continuity at $1$.

Assume now $x_0\ne 0,1$. Let us write its standard finite
continued fraction as
\[
x_0=[0;n_1,\ldots,n_k],
\qquad n_k>1.
\]
For $x$ sufficiently close to $x_0$, the first $k-1$ partial quotients
agree with those of $x_0$. After this common prefix, depending on the
side from which $x$ approaches $x_0$, the continued fraction has one
of the forms
\[
[0;n_1,\ldots, n_{k-1}, n_k,N,\ldots]
\qquad\text{or}\qquad
[0;n_1, \ldots, n_{k-1}, n_k-1,1,N,\ldots],
\]
where $N\to\infty$ as $(x\to x_0)$. In the first continued-fraction form, the remaining tail after
$n_k$ tends to $0$. In the second form, the corresponding tail
tends to $1$; by periodicity and evenness,
\[
T(1-z)=T(z),
\]
so its value again tends to $T(0)$. Substitution into the finite
recurrence~\eqref{eq:eq_recurence_T}, together with the continuity
of $R_\lambda$, gives
\[
T(x)\longrightarrow T(x_0).
\]
Thus $T$ is continuous at every real point.

\smallskip
\noindent\textbf{Remark.}
The continued-fraction expansion in
\eqref{eq:eq_recurence_T} converges exponentially.
Indeed, boundedness of $T$ and
\eqref{eq:eq_two_step_contr} give
\[
\left|
T(x)+\sum_{j=1}^{N}R_\lambda(y_j(x))
\prod_{i=1}^{j}y_i(x)^{-2r}
\right|
\le C\left(\frac23\right)^{2r\lfloor N/2\rfloor},
\]
for every irrational $x\in(0,1)$ and $N\ge1$,
with $C$ independent of $x$ and $N$.

\section{Applications and examples}
\label{sec:applications}

\subsection*{Finite families}
For a single function $P$ satisfying our assumptions, write
\[
T_{\lambda,P}(x)= \sum_{\gamma\in \Gamma} \lambda_P(\gamma,x)
\bigl(\varepsilon_{\gamma,P}{P^\gamma}\bigr)_+(x).
\]
Take a finite family $\mathcal{P}$ satisfying Section~\ref{sec:setup}, with distinct orbits:
\[
\varepsilon_{\gamma_1,P_1}P_1^{\gamma_1} \neq \varepsilon_{\gamma_2,P_2}P_2^{\gamma_2}, \text{ if } P_1\ne P_2\in \mathcal{P},\, \forall \gamma_1,\gamma_2\in \Gamma.
\]
With admissible weights $\lambda_P(\gamma,x)$ for each $P\in\mathcal{P}$, set
\[
T(x):=\sum_{P\in\mathcal{P}} T_{\lambda,P}(x).
\]
This finite sum is $1$-periodic and continuous. The same applies to aggregated sums. 

\subsection*{Zagier's result}
Let $\mathcal{F}$ consist of integral quadratics with negative leading coefficient and fixed nonsquare discriminant $D>0$.   
Zagier \cite{Zagier1999} proved
\[
T(x):= \sum_{P\in \mathcal{F}} P_+(x) =\mathrm{const}.
\]
By the classical reduction theory of indefinite binary quadratic
forms (recalled in \cite[Section~2]{Bengoechea2015}), there are
finitely many equivalence classes of discriminant $D$.
Hence we may choose a finite family $\mathcal{P}\subset\mathcal{F}$
containing exactly one representative of each normalized orbit,
so that
\[
\mathcal{F}=\bigcup_{P\in\mathcal{P}}
\left\{\varepsilon_{\gamma,P}P^\gamma:\gamma\in\Gamma\right\}.
\]
Then
\[
T(x)=\sum_{P\in\mathcal{P}}\, \sum_{\gamma\in \Gamma_P\backslash \Gamma} \lambda_P(\gamma, x)\left(\varepsilon_{\gamma, P}P^{\gamma}\right)_+(x),
\]
where $\lambda_P(\gamma, x):=1$ satisfies our conditions; finiteness follows from integrality and the fixed discriminant. Indeed, the relevant quadratics $aX^2+bX+c$ satisfy $a<0<c$ and $b^2+4|a|c=D$, allowing only finitely many
integer triples. Moreover,
\[
R_{\lambda}:= \sum_{P\in\mathcal{P}} R_{\lambda, P}
\]
is an even quadratic, as are the $R_{\lambda, P}$. Write $R_{\lambda}=a x^2+c$. Lemma~\ref{lem:lem_1} gives
$a=-T(0)$, while $c=R_\lambda(0)=T(0)$; hence $c+a=0$. Set $T^*(x):=T(x)+a$. Then using \eqref{eq:eq_func_eq} we get
\[
x^2T^*\left(\frac{1}{x}\right)-T^*(x)=x^2T\left(\frac{1}{x}\right)-T(x)+ax^2+c=0.
\]
Since $T^*(0)=T(0)+a=c+a=0$, iteration of the equation for $T^*$ along a finite continued fraction gives $T^*(x)=0$ for $x\in\mathbb{Q}$. Continuity yields $T(x)=-a, \ \forall x\in \mathbb{R}$.

We may also assign constant weights to each $P\in \mathcal{P}$: 
\[
\lambda_P(\gamma, x)= \lambda_P>0,\quad \forall \gamma \in \Gamma_P\backslash \Gamma.
\]
Again $T$ is constant, since $R_{\lambda}$ is an even quadratic.

\subsection*{Bengoechea's result}

Let $D>0$ be a nonsquare discriminant and let $m\ge 1$ be an integer.
Bengoechea \cite{Bengoechea2015} considers the sums
\[
A_{m+1,D}(x)
=
\sum_{\substack{R\in\mathbb Z[X],\ \deg R=2\\
\operatorname{disc}(R)=D,\ R(\infty)<0<R(x)}}
R(x)^m.
\]

Let
\[
S_D^{\mathrm{Sim}}
=
\left\{
S\in\mathbb Z[X]:
\deg S=2,\ \operatorname{disc}(S)=D,\ 
S(0)<0<S(\infty)
\right\},
\]
and let $\Gamma(x)$ denote the set of Möbius transformations
arising from the regular continued fraction of $x$.

A pair
\[
(S,\gamma)\in S_D^{\mathrm{Sim}}\times\Gamma(x)
\]
is admissible at $x$ if
\[
S(\gamma(\infty))<0<S(\lfloor\gamma(x)\rfloor),
\]
with the usual terminal convention when $\gamma(x)=\infty$.
Bengoechea's bijection sends each admissible pair to $S^\gamma$,
and gives the representation
\[
A_{m+1,D}(x)
=
\sum_{\substack{S\in S_D^{\mathrm{Sim}},\ \gamma\in\Gamma(x)\\
(S,\gamma)\text{ admissible at }x}}
\bigl(S^\gamma(x)\bigr)^m.
\]
See \cite[Theorem~3.1 and Corollary~3.4]{Bengoechea2015}.

For the normalized orbits of integral quadratics of discriminant $D$, choose representatives
\[
Q_1,\ldots,Q_h
\]
with negative leading coefficient, and set
\[
P_j=
\begin{cases}
Q_j^m,&m\text{ odd},\\
Q_j|Q_j|^{m-1},&m\text{ even}.
\end{cases}
\]
It suffices to check one $P=P_j$, with $Q=Q_j$ and $r=m$.

The function $P$ is continuous and piecewise polynomial, with the roots and signs of $Q$. If $a<0$ is the leading coefficient of $Q$, then
\[
\lim_{x\to-\infty}\frac{P(x)}{x^{2m}}
=
\lim_{x\to+\infty}\frac{P(x)}{x^{2m}}
=
a|a|^{m-1}<0.
\]
Thus $P$ satisfies our initial assumptions.

Using the quadratic action on $Q$ and the degree-$2m$
action on $P$, we have
\[
P^\gamma
=
Q^\gamma|Q^\gamma|^{m-1}.
\]
Since $u\mapsto u|u|^{m-1}$ preserves signs and is injective,
$\varepsilon_{\gamma,P}=\varepsilon_{\gamma,Q}$ and $\Gamma_P=\Gamma_Q$. For $\overline\gamma=\Gamma_P\gamma$, write $P_{\overline\gamma}=\varepsilon_\gamma P^\gamma$.

For the normalized transform $R$ of $Q$,
\[
P_{\overline\gamma}=R|R|^{m-1},
\qquad
(P_{\overline\gamma})_+=(R_+)^m.
\]

Define the coefficients directly on the coset space by
\[
\lambda(\overline\gamma,x)=1,
\qquad
\overline\gamma\in\Gamma_P\backslash\Gamma,\quad
x\in\mathbb R.
\]
The associated function is
\[
T_P(x)
=
\sum_{\overline\gamma\in\Gamma_P\backslash\Gamma}
(P_{\overline\gamma})_+(x).
\]
Each normalized transform of $Q$ contributes its $m$-th power when positive at $x$.

Compatibility is immediate since $\lambda=1$.
Property~(1') follows from the continuity of
$P_{\overline\gamma}$.
Since $P_{\overline\gamma}(0)>0$ if and only if $R(0)>0$,
Property~(3') follows from the finiteness argument
in the Zagier case.

Thus $T_P$ is well defined, bounded, even, $1$-periodic, and continuous, and satisfies the reciprocal relation. Finally,
\[
A_{m+1,D}(x)
=
\sum_{j=1}^{h}T_{P_j}(x).
\]
Hence the same conclusions hold for the full Bengoechea sum.

\subsection*{An example with coefficients depending on $x$} 

For weights $\lambda$ depending on $x$, take an integral quadratic $P$ satisfying the initial assumptions and a function $f$ specified below. Set
\[
\lambda(\gamma,x):= f(\gamma(x)),
\]
where $f(\infty)=0$. Compatibility~\eqref{eq:eq_lambda_condition} is automatic. Choose a nonnegative continuous $f$, positive only in $(\alpha_1, \beta_1)$, with $\alpha<\alpha_1<\beta_1<\beta$. Continuity of $f$ and $P^{\gamma}$ gives Property~(1): near a pole of $\gamma$, the factor $f(\gamma(x))$ vanishes because $f$ has bounded support. Property~(3) holds for these quadratics. 

Let us verify Property~(2). Consider the subsets of $\Gamma_P$:
\[
H_0:=\{\delta\in \Gamma_P : \delta(\alpha)=\alpha, \delta(\beta)=\beta \}\quad H_1:=\{\delta\in \Gamma_P : \delta(\alpha)=\beta, \delta(\beta)=\alpha \}.
\]
Here $H_0$ is finite or infinite cyclic in $\Gamma_P$, and $H_1$ is empty or $H_1=H_0 h_1$ for $h_1\in H_1$. If $H_0$ is finite, Property (2) is immediate. Otherwise, write $H_0=\langle h \rangle \cong \mathbb{Z}$ and set 
\[
S_x:= \left\{n\in \mathbb{Z} : h^{n}(x)\in (\alpha_1,\beta_1) \right\}.
\]
Since $[\alpha_1,\beta_1]\subset (\alpha,\beta)$, we have $\#S_x\le C(\alpha_1,\beta_1)$ with $C$ independent of $x$. For fixed $\gamma\in\Gamma$ and compact $K$ avoiding $\gamma^{-1}\left( \{\alpha, \beta\}\right)$,
\[
\#\bigcup_{x\in K} S_{\gamma(x)}<\infty.
\]
Indeed, as $n\to+\infty$ and $n\to-\infty$, $h^n(\gamma(x))$ tends uniformly for $x\in K$ to the two fixed points $\alpha,\beta$, in some order. Since $[\alpha_1,\beta_1]$ avoids both fixed points, only finitely many integers $n$ can occur.

Fix $\gamma\in \Gamma$ and let $\varepsilon>0$. There exists a neighborhood $U$ of $\left\{ \gamma^{-1}(\alpha)\right\}\cup \left\{ \gamma^{-1}(\beta)\right\}$ such that
\[
\left|P^{\gamma}(x)\right| \sum_{\delta\in \Gamma_P} \lambda(\delta\gamma, x)\le  2C(\alpha_1,\beta_1)\|f\|_{\infty}\cdot \left|P^{\gamma}(x)\right|<\varepsilon, \quad \forall x\in U. 
\]
For compact $K$ with $K\cap U=\emptyset$, the same argument gives a finite $F\subset \Gamma_P $ such that
\[
\left|P^{\gamma}(x)\right| \sum_{\delta\in \Gamma_P} \lambda(\delta\gamma, x)=
\left|P^{\gamma}(x)\right| \sum_{\delta\in F} \lambda(\delta\gamma, x),\quad \forall x\in K.
\]
These estimates give local uniform convergence. Here $T$ need not be constant.

\bibliographystyle{plain}
\bibliography{references}

\end{document}